\documentclass[11pt]{article}

\usepackage{lmodern}
\usepackage[T1]{fontenc}
\usepackage[utf8]{inputenc}
\usepackage{amsmath}
\usepackage{amssymb}
\usepackage{amsthm}
\usepackage{microtype}
\usepackage[numbers,sort&compress]{natbib}
\usepackage[margin=1in]{geometry}
\usepackage{hyperref}
\hypersetup{colorlinks=true, linkcolor=blue, citecolor=blue, urlcolor=blue}

\theoremstyle{plain}
\newtheorem{theorem}{Theorem}[section]
\newtheorem{lemma}[theorem]{Lemma}
\newtheorem{corollary}[theorem]{Corollary}
\newtheorem{conjecture}[theorem]{Conjecture}
\theoremstyle{definition}
\newtheorem{definition}[theorem]{Definition}
\newtheorem{example}[theorem]{Example}
\theoremstyle{remark}
\newtheorem{remark}[theorem]{Remark}

\title{Impartial Combinatorial Games and the\\ Nuclear Escalation Ladder}
\author{Arnav Garg\\ \small Birla Institute of Technology and Science, Pilani}
\date{}

\begin{document}
\maketitle

\begin{abstract}
We model Herman Kahn's escalation ladder as an impartial combinatorial game.
Reindexing each rung by its distance to the nuclear threshold turns the
ladder into a subtraction game, the most tractable class in combinatorial
game theory, and the doctrinal fact that no side wishes to fire first selects
the mis\`ere convention. We prove that single-ladder stability is governed by
a congruence (Theorem~\ref{thm:A}) and derive a ladder-design corollary that
makes the burden of first escalation a function of ladder length and
escalation granularity (Corollary~\ref{cor:Aprime}). For simultaneous
theaters we show, under normal play, that joint stability is the Nim-sum of
the theater-wise escalation distances (Theorem~\ref{thm:B}), a condition that
is neither additive nor dominated by the most dangerous theater. We then show
the Nim-sum reduction fails under mis\`ere play, introduce the mis\`ere
quotient as its replacement, and prove by exhaustive backward induction that
for two-step escalation the quotient is the order-six monoid
$\langle a,b\mid a^2=1,\,b^3=b\rangle$ with loss set $\{a,b^2\}$
(Theorem~\ref{thm:misere2}). To our knowledge, impartial combinatorial game
theory has not previously been applied to nuclear escalation ladders; the
existing game-theoretic literature on escalation is classical and
payoff-based.
\end{abstract}

\section{Introduction}\label{sec:intro}

The escalation ladder, introduced by Kahn \citep{kahn1965} as a forty-four
rung metaphor for the controlled intensification of a superpower crisis, has
shaped how strategists reason about the path from peace to nuclear use. The
metaphor is fundamentally combinatorial: a crisis occupies a discrete rung,
and each side chooses how far up the ladder to step. Yet the formal game
theory built on this picture has been almost exclusively classical, resting
on cardinal or ordinal payoffs and equilibrium concepts
\citep{schelling1960, schelling1966, powell1990, brams1985}. That tradition
has been productive, but it embeds strong assumptions about utilities,
common knowledge, and rationality, and it tends to obscure the purely
positional question that the ladder metaphor makes vivid: given where the
crisis stands and whose turn it is to act, who is structurally forced to
take the next, possibly fatal, step?

This paper takes a different tool to the same object. We model the
escalation ladder as an \emph{impartial combinatorial game} in the sense of
Sprague, Grundy, and Conway \citep{sprague1935, grundy1939, conway1976,
bcg1982}, where positions are rungs, moves are doctrine-bounded escalations,
and the nuclear threshold is terminal. Reindexing a position by its distance
to the threshold turns the ladder into a \emph{subtraction game}, the most
analytically tractable class in the theory \citep{guysmith1956}. Because no
side wishes to be the one to cross the threshold, the natural convention is
mis\`ere play, in which the player forced across the threshold loses.

To our knowledge, impartial combinatorial game theory, and in particular
Sprague-Grundy and mis\`ere analysis, has not previously been applied to
nuclear escalation ladders; the existing game-theoretic literature on
escalation is classical and payoff-based. The trade-off in adopting the
impartial lens is explicit: we discard cardinal payoffs and retain only the
win/lose, whose-turn-it-is structure. What we gain in exchange is
closed-form structural invariants. We prove that single-ladder stability is
governed by a simple congruence (Theorem~\ref{thm:A}), and that the
stability of a multi-theater confrontation is governed by the Nim-sum of the
theater-wise escalation distances (Theorem~\ref{thm:B}), a condition that is
neither additive nor dominated by the most dangerous theater. These are
results that equilibrium analysis does not naturally surface.

The remainder of the paper proceeds as follows. Section~\ref{sec:related}
positions the contribution against combinatorial game theory, the classical
game theory of deterrence, and two near-neighbors we are careful not to
duplicate. Section~\ref{sec:model} formalizes the escalation game;
Sections~\ref{sec:single} and~\ref{sec:multi} prove the single-ladder and
multi-theater results; and Sections~\ref{sec:misere} through
\ref{sec:policy} treat mis\`ere sums, three-player extensions, and the
attendant policy interpretation and limitations.

\section{Related work}\label{sec:related}

\subsection{Combinatorial game theory}
The impartial theory begins with Bouton's solution of Nim \citep{bouton1901}
and its generalization by Sprague \citep{sprague1935} and Grundy
\citep{grundy1939}, which assigns to every position of an impartial game a
nonnegative Grundy value and reduces disjunctive sums to Nim addition. The
modern synthesis is due to Conway \citep{conway1976} and Berlekamp, Conway,
and Guy \citep{bcg1982}; subtraction games and their ultimately periodic
Grundy sequences are treated by Guy and Smith \citep{guysmith1956}, and the
full apparatus is surveyed by Siegel \citep{siegel2013}. Mis\`ere play, where
the last player to move loses, is notoriously less tractable, but Plambeck
and Siegel's theory of mis\`ere quotients \citep{plambeck2005,
plambecksiegel2008} provides a finite-monoid framework that we invoke for
the multi-theater mis\`ere case. For three or more players the theory loses
its clean structure; we return to Li \citep{li1978}, Straffin
\citep{straffin1985}, and Propp \citep{propp2000} in our tripolar extension.

\subsection{Classical game theory of deterrence}
The dominant formal tradition in nuclear strategy is non-cooperative,
payoff-based game theory. Schelling \citep{schelling1960, schelling1966}
framed deterrence around commitment, brinkmanship, and the threat that
leaves something to chance. Powell \citep{powell1990, powell1988} developed
sequential equilibrium models of brinkmanship and crisis stability under the
risk of inadvertent escalation.

The sharpest point of contrast is the work of Brams. In \emph{Superpower
Games} \citep{brams1985}, \emph{Game Theory and National Security} with
Kilgour \citep{bramskilgour1988}, and the Theory of Moves \citep{brams1994},
Brams models escalation and deterrence as classical games over ordinal
preference orderings, with players reasoning about moves, countermoves, and
counter-countermoves to non-myopic equilibria; the explicit escalation and
de-escalation dynamics appear in Brams and Kilgour \citep{bramskilgour1987}.
That program and ours share a vocabulary, escalation, deterrence, and the
ladder, but differ in their primitives. Brams retains payoffs and asks which
outcomes are stable under rational preference-driven moves; we discard
payoffs and ask the purely combinatorial question of which configurations
force a given side to act first under a fixed move structure. The two are
complementary: where Brams's analysis identifies preference conditions for
stable outcomes, ours yields arithmetic invariants, a congruence in one
theater and a Nim-sum across theaters, that hold regardless of the
underlying utilities. We therefore position the impartial model not as a
replacement for the classical deterrence games but as a second lens that
isolates the structure those games leave implicit.

\subsection{Near-neighbors we are not}
Two adjacent formalisms warrant explicit distinction. The Colonel Blotto
game \citep{roberson2006} also concerns conflict across multiple
fronts, but its moves are \emph{simultaneous} resource allocations evaluated
by payoffs, whereas our theaters are played \emph{sequentially} as an
impartial disjunctive sum. Shubik's dollar auction \citep{shubik1971} is the
canonical model of escalation as an entrapment trap, but it too is a
payoff-driven account of sunk-cost commitment, not an impartial win/lose
game. Our contribution is distinct from both in its primitives and in the
type of result it produces.

\section{The model}\label{sec:model}

\subsection{Reindexing the ladder}
Kahn's ladder \citep{kahn1965} presents a crisis as occupying one of $N+1$
ordered rungs, $n\in\{0,1,\dots,N\}$, with $n=0$ the status quo and $n=N$
the nuclear threshold. Escalation raises $n$. We reindex each position by
its \emph{distance to the threshold}, $r := N - n$, the number of rungs that
remain before nuclear use. Under this change of variable an escalation by
$k$ rungs becomes the subtraction $r \mapsto r-k$, and the crisis terminates
when $r$ reaches $0$. The ladder thereby becomes a \emph{subtraction game},
the most analytically tractable class of impartial games, whose Grundy
sequences are ultimately periodic \citep{guysmith1956}. This single
reindexing is what makes the Sprague-Grundy apparatus
\citep{sprague1935, grundy1939, conway1976} available.

\begin{definition}[Single-theater escalation game]\label{def:single}
Fix integers $m\ge 1$ and $N\ge 1$. The state space is
$\{0,1,\dots,N\}$, a state $r$ recording the rungs remaining to the
threshold. From a state $r\ge 1$ the player to move must choose a step
$d\in S$, where $S=\{1,\dots,m\}$ is the escalation set, subject to
$d\le r$, and move to $r-d$. The state $r=0$ is terminal and represents
nuclear use. Under the mis\`ere convention the player who moves to $0$ loses.
\end{definition}

\begin{remark}[Terminal labelling under mis\`ere play]\label{rem:terminal}
Under the mis\`ere convention the empty state $r=0$ is an
$\mathcal{N}$-position: the player confronting $r=0$ does not move, and the
opponent, having just moved to $0$, has fired and lost. With this single
change the usual backward recursion applies: a state is a
$\mathcal{P}$-position (the player to move loses) if and only if every option
is an $\mathcal{N}$-position, and an $\mathcal{N}$-position if and only if
some option is a $\mathcal{P}$-position. We use this labelling throughout.
\end{remark}

Three modeling choices in Definition~\ref{def:single} deserve comment.
First, the bounded set $S=\{1,\dots,m\}$ encodes a doctrinal and capability
cap on how far a single decision can escalate a crisis; $m$ is the
escalation granularity. Second, the \emph{must-move} requirement, that a
player at $r\ge 1$ cannot pass, encodes the Schelling logic of commitment
and the political cost of visibly backing down once a crisis is joined
\citep{schelling1960, schelling1966}: in our base model a party on the
ladder must take some escalatory step rather than stand pat. Third, the
\emph{mis\`ere} convention reflects the defining feature of nuclear crisis,
that neither side wishes to be the one to cross the threshold; normal play,
in which crossing would be a win, is doctrinally inverted and we use it only
as the technical setting in which disjunctive sums reduce to Nim-sums
(Theorem~\ref{thm:B}).

\begin{definition}[Multi-theater escalation game]\label{def:multi}
Let $k\ge 1$ theaters be given, theater $i$ an instance of
Definition~\ref{def:single} with parameters $N_i, m_i$ and state $r_i$. The
joint state is $(r_1,\dots,r_k)$. On each turn the player to move selects
exactly one theater $i$ with $r_i\ge 1$ and makes a single legal move in
that theater. The game is the disjunctive sum of its $k$ components.
\end{definition}

Definition~\ref{def:multi} adopts the standard combinatorial convention that
a turn consists of one move in one component \citep{bcg1982, siegel2013}.
This is the assumption under which the Sprague-Grundy theorem applies and
under which Theorem~\ref{thm:B} holds. Permitting simultaneous moves in
several theaters would change the object entirely, turning it into a
resource-allocation contest of the Colonel Blotto type
\citep{roberson2006}, which we treat as a distinct model in
Section~\ref{sec:related} rather than a variant of ours.

\paragraph{Firebreaks.}
A firebreak is a rung at which escalation is doctrinally constrained, for
instance the conventional-nuclear boundary. We model it as a state with a
restricted escalation set: at a designated state $r_f$ the available set is
$S_f \subsetneq S$, for example $S_f=\{1\}$ forcing a minimal step across
the boundary. A firebreak changes the move set locally but leaves the
subtraction-game structure intact, so the Grundy and Nim-sum machinery
continues to apply with the per-state options adjusted accordingly.

\paragraph{What the model does not claim.}
The escalation set $S$ is \emph{assumed}, not derived from any specific
doctrine; mapping real capabilities and declaratory policy onto a subtraction
set is an interpretive step, and our results should be read as holding across
plausible families of $S$ rather than as following from a unique
calibration. Payoffs are absent \emph{by design}: the impartial model retains
only the win/lose and whose-turn-it-is structure, discarding the cardinal
utilities that classical deterrence models employ \citep{powell1990,
brams1985}. Consequently the framework is a \emph{structural} lens that
isolates the combinatorics of who is forced to act, not a predictive model of
crisis behavior, and its claims are about positional invariants rather than
about what decision-makers will in fact do.

\section{Single-ladder analysis}\label{sec:single}

We now solve the single-theater game of Definition~\ref{def:single} under
the mis\`ere convention and draw out its doctrinal content. Throughout, recall
the terminal labelling of Remark~\ref{rem:terminal}: under mis\`ere play the
threshold state $r=0$ is an $\mathcal{N}$-position, since the player who
escalates to it has fired and thereby lost.

\subsection{The deterrence-periodicity theorem}

\begin{theorem}[Deterrence periodicity]\label{thm:A}
In the mis\`ere single-theater game with escalation set $S=\{1,\dots,m\}$ and
threshold at $r=0$, the $\mathcal{P}$-positions are exactly the states
\[
  \mathcal{P} \;=\; \{\, r \ge 0 \;:\; r \equiv 1 \pmod{m+1} \,\}.
\]
Equivalently, rung $n$ is a $\mathcal{P}$-position if and only if
$N - n \equiv 1 \pmod{m+1}$.
\end{theorem}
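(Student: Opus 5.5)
The plan is strong induction on $r$, using the backward recursion of Remark~\ref{rem:terminal} with $r=0$ labelled $\mathcal{N}$. Let $Q=\{r\ge 0: r\equiv 1 \pmod{m+1}\}$. I will prove, for every $r\ge 0$, that $r$ is a $\mathcal{P}$-position if and only if $r\in Q$. Since $\mathcal{P}/\mathcal{N}$ labels are uniquely determined by the recursion, it suffices to check two closure properties of $Q$. First, no element of $Q$ has a legal option in $Q$. Second, every state outside $Q$ is either terminal (and labelled $\mathcal{N}$ by convention) or has a legal option in $Q$.

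\emph{Base.} The state $r=0$ is not in $Q$, because $0\not\equiv 1 \pmod{m+1}$ when $m\ge 1$; by Remark~\ref{rem:terminal} it is an $\mathcal{N}$-position. The state $r=1$ lies in $Q$. Its only option is $d=1$, which leads to $0$, an $\mathcal{N}$-position, so $r=1$ is a $\mathcal{P}$-position.

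\emph{Inductive step, $r\in Q$ with $r\ge 2$.} Then $r\ge m+2$. Every option is $r-d$ with $1\le d\le m$, so $r-d\equiv 1-d \pmod{m+1}$. Since $1-d\in\{1-m,\dots,0\}$, this residue is never $1$ modulo $m+1$. By induction every option is therefore $\mathcal{N}$, and so $r$ is $\mathcal{P}$.

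\emph{Inductive step, $r\notin Q$ with $r\ge 1$.} Let $d\in\{0,\dots,m\}$ be the unique residue with $r-1\equiv d \pmod{m+1}$. Because $r\notin Q$, we have $d\ne 0$, so $1\le d\le m$. Also $r\ge 1+d$, because $r-1\ge 0$ and $r-1\equiv d$ with $0\le d\le m$. Hence the move $d$ is legal, since $d\le r$. The resulting state $r-d$ is at least $1$ and satisfies $r-d\equiv 1$, so by induction it is a $\mathcal{P}$-position, and $r$ is $\mathcal{N}$.

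The only delicate point is the boundary bookkeeping around the misère terminal state. I must confirm that the chosen move $d$ never overshoots, i.e.\ $d\le r$. I must also confirm that $0$, although it is "the bottom," is correctly excluded from $Q$, which uses $m\ge 1$. Everything else is routine modular arithmetic.

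The equivalent rung formulation follows immediately from the substitution $r=N-n$ of Section~\ref{sec:model}. The statement is restricted to $0\le r\le N$, and the induction above never references states beyond $r$, so truncating at $N$ is harmless.
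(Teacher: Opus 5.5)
Your proof is correct and takes essentially the same approach as the paper: strong induction on $r$ with base cases $r=0,1$, showing that every option of $r\equiv 1 \pmod{m+1}$ has residue $1-d\not\equiv 1$, and that otherwise the step $d=(r-1)\bmod(m+1)$ is legal and lands on a $\mathcal{P}$-position. Your explicit checks that $d\le r$ and that $0\notin Q$ because $m\ge 1$ only spell out boundary facts the paper's proof uses implicitly.
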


\begin{proof}
We argue by strong induction on $r$, proving jointly:
(I) if $r \equiv 1 \pmod{m+1}$ then $r$ is a $\mathcal{P}$-position; and
(II) if $r \not\equiv 1 \pmod{m+1}$ then $r$ is an $\mathcal{N}$-position.

\emph{Base cases.} For $r=0$ we have $0\not\equiv 1\pmod{m+1}$, and by
Remark~\ref{rem:terminal} the state $r=0$ is an $\mathcal{N}$-position,
consistent with (II). For $r=1$ we have $1\equiv 1\pmod{m+1}$; the only legal
move subtracts $1$ and lands on $0$, so the mover fires and loses, making
$r=1$ a $\mathcal{P}$-position, consistent with (I).

\emph{Inductive step.} Let $r\ge 2$ and assume (I), (II) for all smaller
states.

For (I), suppose $r\equiv 1\pmod{m+1}$. Each option $r-k$ with
$k\in\{1,\dots,m\}$ satisfies $r-k\equiv 1-k\pmod{m+1}$, and as $k$ ranges
over $\{1,\dots,m\}$ the residue $1-k$ ranges over
$\{0,m,m-1,\dots,2\}\pmod{m+1}$, never equal to $1$. Hence every option is
$\not\equiv 1$, and by the induction hypothesis (II) every option is an
$\mathcal{N}$-position. A state all of whose options are $\mathcal{N}$ is a
$\mathcal{P}$-position.

For (II), suppose $r\not\equiv 1\pmod{m+1}$ and set
$j:=\big((r-1)\bmod(m+1)\big)$. Since $r\not\equiv 1$, we have $j\neq 0$, so
$j\in\{1,\dots,m\}$ is a legal step, and $r-j\equiv 1\pmod{m+1}$ with
$r-j\ge 1$ because $r\ge 2$ and $j\le r-1$. By the induction hypothesis (I)
this option is a $\mathcal{P}$-position, so $r$ has a $\mathcal{P}$-option
and is an $\mathcal{N}$-position.

By induction, (I) and (II) hold for all $r$.
\end{proof}

\subsection{Ladder design and deterrence by congruence}

\begin{corollary}[Status-quo stability]\label{cor:Aprime}
The status-quo rung $n=0$ (peace, $r=N$) is a $\mathcal{P}$-position for the
side contemplating first escalation if and only if
$N \equiv 1 \pmod{m+1}$.
\end{corollary}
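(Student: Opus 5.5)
This corollary is a direct specialization of Theorem~\ref{thm:A}, so the plan is simply to instantiate that theorem at the status-quo rung. First, under the reindexing $r = N - n$ of Section~\ref{sec:model}, the status-quo rung $n=0$ corresponds to the state $r = N$. Since $N\ge 1$, this state lies in the state space $\{0,1,\dots,N\}$ of Definition~\ref{def:single}, and it is a genuine nonterminal position: the side contemplating first escalation is the player to move from $r=N$.

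Next, I will apply Theorem~\ref{thm:A} with $r = N$. The theorem characterizes the $\mathcal{P}$-positions of the mis\`ere game with escalation set $S=\{1,\dots,m\}$ as exactly those $r$ with $r\equiv 1 \pmod{m+1}$. Hence $r=N$ is a $\mathcal{P}$-position if and only if $N\equiv 1\pmod{m+1}$. Equivalently, using the rung form of the theorem with $n=0$, rung $0$ is a $\mathcal{P}$-position if and only if $N-0\equiv 1\pmod{m+1}$.

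Finally, I will interpret the labels. A $\mathcal{P}$-position means the player to move loses under optimal play, and this player is precisely the side contemplating first escalation. That gives the stated equivalence. For the converse direction I will also note that when $N\not\equiv 1$, Theorem~\ref{thm:A} makes $r=N$ an $\mathcal{N}$-position, so the status quo is then not stable for the mover.

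The only point needing any care is confirming that the proof of Theorem~\ref{thm:A} is local to the states $0,\dots,r$. The game on $\{0,\dots,N\}$ started at $r=N$ only ever visits states $\le N$, and the backward induction for each $r$ uses only smaller states, so the cap $N$ on the ladder does not affect the classification. I do not expect a real obstacle here.
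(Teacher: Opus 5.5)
Your proposal is correct and is the paper's argument: the corollary follows immediately from Theorem~\ref{thm:A} applied at $r=N$, which is the distance corresponding to the status-quo rung $n=0$. Your extra observation is valid but not needed: the backward induction at $r$ uses only smaller states, so the cap $N$ on the ladder does not affect the classification.
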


\begin{proof}
Immediate from Theorem~\ref{thm:A} with $r=N$, since $n=0$ gives $r=N$.
\end{proof}

\begin{remark}[Doctrinal reading]\label{rem:design}
A $\mathcal{P}$-position is one in which the player who must act is, under
optimal opposing play, eventually forced to be the one to cross the
threshold. Corollary~\ref{cor:Aprime} therefore says that the burden of
first escalation falls on the initiator precisely when
$N\equiv 1\pmod{m+1}$: in those ladders the very act of stepping off the
status quo hands the initiative to the adversary. This formalizes, in
purely positional terms, Kahn's intuition that firebreaks and rung structure
shape who is advantaged in a crisis \citep{kahn1965}. It also sharpens the
flexible-response versus massive-retaliation debate. Adding rungs to gain
finer control (smaller steps, hence smaller $m$) does not monotonically
stabilize or destabilize the status quo; whether the initiator is burdened
depends on the congruence class of $N$ modulo $m+1$, not on the number of
rungs alone. A ladder lengthened from $N$ to $N+1$ can flip the status quo
from $\mathcal{N}$ to $\mathcal{P}$ or back, so doctrine that simply
multiplies intermediate options may stabilize or destabilize depending on an
arithmetic condition that is easy to overlook.
\end{remark}

\subsection{Worked examples}

\begin{example}[$m=1$, $S=\{1\}$, $N=12$]\label{ex:m1}
Here $\mathcal{P}$-positions are $r\equiv 1\pmod 2$, the odd distances.
\[
\begin{array}{c|c|c}
\text{rung } n & \text{distance } r & \text{type}\\\hline
12 & 0 & \mathcal{N}\\
11 & 1 & \mathcal{P}\\
10 & 2 & \mathcal{N}\\
9 & 3 & \mathcal{P}\\
8 & 4 & \mathcal{N}\\
7 & 5 & \mathcal{P}\\
6 & 6 & \mathcal{N}\\
5 & 7 & \mathcal{P}\\
4 & 8 & \mathcal{N}\\
3 & 9 & \mathcal{P}\\
2 & 10 & \mathcal{N}\\
1 & 11 & \mathcal{P}\\
0 & 12 & \mathcal{N}
\end{array}
\]
With unit steps the stable rungs simply alternate, and the status quo
($r=12$, even) is an $\mathcal{N}$-position, so here the initiator is not
structurally burdened.
\end{example}

\begin{example}[$m=2$, $S=\{1,2\}$, $N=10$: the Europe theater]\label{ex:m2}
Here $\mathcal{P}$-positions are $r\equiv 1\pmod 3$.
\[
\begin{array}{c|c|c}
\text{rung } n & \text{distance } r & \text{type}\\\hline
10 & 0 & \mathcal{N}\\
9 & 1 & \mathcal{P}\\
8 & 2 & \mathcal{N}\\
7 & 3 & \mathcal{N}\\
6 & 4 & \mathcal{P}\\
5 & 5 & \mathcal{N}\\
4 & 6 & \mathcal{N}\\
3 & 7 & \mathcal{P}\\
2 & 8 & \mathcal{N}\\
1 & 9 & \mathcal{N}\\
0 & 10 & \mathcal{P}
\end{array}
\]
Since $N=10\equiv 1\pmod 3$, the status quo ($r=10$) is a
$\mathcal{P}$-position: in this theater the side that first steps off peace
is, under optimal play, the side eventually forced to cross the threshold.
\end{example}

\begin{example}[$m=3$, $S=\{1,2,3\}$, $N=12$]\label{ex:m3}
Here $\mathcal{P}$-positions are $r\equiv 1\pmod 4$.
\[
\begin{array}{c|c|c}
\text{rung } n & \text{distance } r & \text{type}\\\hline
12 & 0 & \mathcal{N}\\
11 & 1 & \mathcal{P}\\
10 & 2 & \mathcal{N}\\
9 & 3 & \mathcal{N}\\
8 & 4 & \mathcal{N}\\
7 & 5 & \mathcal{P}\\
6 & 6 & \mathcal{N}\\
5 & 7 & \mathcal{N}\\
4 & 8 & \mathcal{N}\\
3 & 9 & \mathcal{P}\\
2 & 10 & \mathcal{N}\\
1 & 11 & \mathcal{N}\\
0 & 12 & \mathcal{N}
\end{array}
\]
With wider permissible jumps the stable rungs are sparser, spaced every four
distances, and the status quo ($r=12\equiv 0\pmod 4$) is an
$\mathcal{N}$-position: larger $m$ thins out the stable configurations.
\end{example}

\subsection{Robustness to non-contiguous escalation sets}

The clean congruence of Theorem~\ref{thm:A} is special to contiguous sets
$S=\{1,\dots,m\}$. For a non-contiguous set, such as the flexible-response
jump set $S=\{1,3\}$ that permits a small or a large step but not a medium
one, the position remains a finite subtraction game, so its Grundy sequence
is still computable and is ultimately periodic by the theorem of Guy and
Smith \citep{guysmith1956}. What is lost is the simple modular formula: the
$\mathcal{P}$-positions no longer occupy a single residue class and must be
read off the periodic Grundy sequence case by case. This is the first point
at which the analysis stops being closed-form, and it motivates the
extensions taken up in Section~\ref{sec:extensions}.

\section{Multi-theater analysis}\label{sec:multi}

We analyze the disjunctive sum of $k$ single-ladder games under the
\emph{normal-play} convention: a player unable to move loses, equivalently
the player who makes the final escalation (the one that brings the last
remaining theater to its threshold) wins. We comment on the doctrinal
reading of this convention in Remark~\ref{rem:normal} below.

\begin{lemma}[Component Grundy values and reachability]\label{lem:grundy}
In normal play, the single-theater subtraction game with escalation set
$S_i=\{1,\dots,m_i\}$ has Grundy value
\[
  G_i(r) \;=\; r \bmod (m_i+1), \qquad r \ge 0 .
\]
Moreover, for every state $r\ge 0$ and every value
$v \in \{0,1,\dots,G_i(r)-1\}$ there is a legal move from $r$ to some
state of Grundy value $v$, and no legal move preserves the Grundy value.
\end{lemma}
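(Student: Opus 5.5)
We argue by strong induction on $r$, using the mex characterization of Grundy values: $G_i(r)=\operatorname{mex}\{G_i(r-d): d\in S_i,\ d\le r\}$, with $G_i(0)=0$ because the terminal state has no options. Write $M:=m_i+1$ and $g(r):=r \bmod M$. The aim is to show that the option set of $r$ has Grundy values exactly $\{0,1,\dots,g(r)-1\}$ together possibly with values larger than $g(r)$, and never $g(r)$ itself. Then the mex is $g(r)$, and both the reachability clause and the non-preservation clause follow.

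The base case $r=0$ is immediate: there are no options, the mex is $0=g(0)$, and both extra clauses hold vacuously since $\{0,\dots,-1\}=\emptyset$. For the inductive step, fix $r\ge 1$ and assume $G_i(s)=g(s)$ for all $s<r$. First we check the reachability part. Given $v\in\{0,\dots,g(r)-1\}$, set $d:=g(r)-v$. Then $1\le d\le g(r)\le m_i$, so $d\in S_i$, and $d\le g(r)\le r$, so the move is legal. Moreover $r-d\equiv v\pmod M$ with $0\le v<M$, so by the induction hypothesis $G_i(r-d)=v$.

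Next we check that no option has value $g(r)$. For any legal $d\in\{1,\dots,m_i\}$ we have $r-d\equiv r-d\pmod M$, and $d\not\equiv 0\pmod M$ because $1\le d\le m_i<M$. Hence $g(r-d)\neq g(r)$, and by the induction hypothesis $G_i(r-d)\neq g(r)$. Combining the two parts, the option values contain every value below $g(r)$ and omit $g(r)$, so $\operatorname{mex}=g(r)$. This closes the induction and simultaneously gives the last two assertions of the lemma.

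The only delicate point, and hence the expected main obstacle, is legality near the bottom of the ladder, where the constraint $d\le r$ truncates the move set. This is handled by the observation that the required step $d=g(r)-v$ never exceeds $g(r)\le r$. For the non-preservation claim only the restriction $d\le m_i<M$ is used. We do not need to identify the option values above $g(r)$ (they can occur, e.g.\ by moving from $r=M$ down by one step), since the mex depends only on which values below and at $g(r)$ appear.
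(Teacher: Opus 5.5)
Your proof is correct and follows the paper's route: strong induction on $r$ through the mex recursion, with non-preservation coming from $d\not\equiv 0 \pmod{m_i+1}$. The only difference is organizational. The paper computes the full set of option values in the two cases $r\le m_i$ and $r>m_i$ and reads reachability off that set, whereas you check the two mex-defining properties directly using the explicit step $d=g(r)-v$, which avoids the case split.
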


\begin{proof}
We compute $G_i$ by the minimum-excludant (mex) recursion
$G_i(r)=\operatorname{mex}\{G_i(r-d): d\in S_i,\ d\le r\}$, by induction on
$r$. For $r=0$ there are no options, so $G_i(0)=\operatorname{mex}
\varnothing = 0 = 0\bmod(m_i+1)$.

Let $r\ge 1$ and assume the formula for all smaller states. The options
are $r-d$ for $d=1,\dots,\min(m_i,r)$, with Grundy values
$\{(r-d)\bmod(m_i+1)\}$. If $r\le m_i$, these values are
$\{r-1,r-2,\dots,0\}=\{0,1,\dots,r-1\}$, whose mex is $r=r\bmod(m_i+1)$.
If $r> m_i$, the $m_i$ values $(r-1),\dots,(r-m_i)$ taken modulo $m_i+1$
are exactly the $m_i$ residues $\{0,1,\dots,m_i\}\setminus\{r\bmod(m_i+1)\}$,
whose mex is the single missing residue $r\bmod(m_i+1)$. In both cases
$G_i(r)=r\bmod(m_i+1)$.

For reachability: in both cases above the set of option values contains
$\{0,1,\dots,G_i(r)-1\}$, so every $v<G_i(r)$ is attained by some option.
Finally, a move subtracts $d\in\{1,\dots,m_i\}$, and since
$d\not\equiv 0 \pmod{m_i+1}$ we have $G_i(r-d)=(r-d)\bmod(m_i+1)\neq
r\bmod(m_i+1)=G_i(r)$; hence no legal move preserves the Grundy value.
\end{proof}

\begin{theorem}[Multi-theater Nim-sum stability]\label{thm:B}
Consider $k$ independent theaters, theater $i$ in state $r_i$ with
escalation set $S_i=\{1,\dots,m_i\}$. On each turn the player to move
selects exactly one theater $i$ and subtracts some $d\in S_i$ with
$d\le r_i$. Under normal play, the joint state $(r_1,\dots,r_k)$ is a
$\mathcal{P}$-position if and only if
\[
  \bigoplus_{i=1}^{k} G_i(r_i)
  \;=\;
  \bigoplus_{i=1}^{k}\big(r_i \bmod (m_i+1)\big)
  \;=\; 0,
\]
where $\oplus$ denotes the bitwise exclusive-or (Nim-sum).
\end{theorem}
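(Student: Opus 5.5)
The plan is to prove Theorem~\ref{thm:B} directly by the Bouton-style characterization, using only the three properties established in Lemma~\ref{lem:grundy}, rather than citing the full Sprague--Grundy theorem. Define
\[
  \mathcal{Z} := \Big\{(r_1,\dots,r_k) : \bigoplus_{i=1}^k G_i(r_i) = 0\Big\}.
\]
I will show by strong induction on $\sum_i r_i$ (every move strictly decreases this quantity, so the game is finite) that $\mathcal{Z}$ is exactly the set of normal-play $\mathcal{P}$-positions. It suffices to verify three facts: the terminal position lies in $\mathcal{Z}$; every move from a position in $\mathcal{Z}$ leaves $\mathcal{Z}$; and every position outside $\mathcal{Z}$ has a move into $\mathcal{Z}$. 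The induction then gives the standard conclusion. Positions in $\mathcal{Z}$ have only options outside $\mathcal{Z}$, which are $\mathcal{N}$ by hypothesis, so they are $\mathcal{P}$. Positions outside $\mathcal{Z}$ have an option in $\mathcal{Z}$, which is $\mathcal{P}$, so they are $\mathcal{N}$.

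\textbf{Terminal case.} The unique terminal state is $(0,\dots,0)$, since a position with some $r_i\ge 1$ always admits the move $d=1\in S_i$. Here $G_i(0)=0$ for all $i$, so the Nim-sum is $0$ and the position is in $\mathcal{Z}$. It is $\mathcal{P}$ under normal play because the mover cannot move.

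\textbf{Moves out of $\mathcal{Z}$.} A move changes exactly one coordinate $r_i$ to $r_i-d$. By the last clause of Lemma~\ref{lem:grundy}, $G_i(r_i-d)\neq G_i(r_i)$. Writing $x=\bigoplus_{j\ne i}G_j(r_j)$, the old sum is $x\oplus G_i(r_i)=0$, so $x=G_i(r_i)$. The new sum is $x\oplus G_i(r_i-d)\neq 0$.

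\textbf{Moves into $\mathcal{Z}$.} This is the step requiring care. Suppose $s:=\bigoplus_i G_i(r_i)\neq 0$, and let $2^t$ be the highest power of two in the binary expansion of $s$. Some component $i$ has bit $t$ set in $G_i(r_i)$. Set $v:=G_i(r_i)\oplus s$. Clearing bit $t$ and altering only lower bits gives $v<G_i(r_i)$. By the reachability clause of Lemma~\ref{lem:grundy}, there is a legal move in theater $i$ to a state of Grundy value $v$. The new total is $s\oplus G_i(r_i)\oplus v=0$, so the resulting position lies in $\mathcal{Z}$. The main obstacle is just this bit-level argument together with correctly invoking reachability. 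The reachability must hold for all $r$, including $r_i\le m_i$, and Lemma~\ref{lem:grundy} already covers that case. Finally, the displayed equality of the two Nim-sums in the theorem is immediate from the formula $G_i(r)=r\bmod(m_i+1)$ in Lemma~\ref{lem:grundy}.
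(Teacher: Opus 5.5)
Your proposal is correct and follows essentially the same route as the paper: induction on $\sum_i r_i$, the terminal state $(0,\dots,0)$ with Nim-sum $0$, the no-Grundy-preserving-move clause of Lemma~\ref{lem:grundy} to show that every move from a zero Nim-sum yields a nonzero one, and the highest-set-bit argument plus reachability to move from a nonzero Nim-sum to zero. Your explicit remark that $(0,\dots,0)$ is the unique terminal state (because $1\in S_i$) is a small point the paper leaves implicit.
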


\begin{proof}
Write $g_i := G_i(r_i)$ and $s := \bigoplus_{i=1}^{k} g_i$. The terminal
state is $(0,\dots,0)$, which has no moves and is therefore a
$\mathcal{P}$-position; its Nim-sum is $s=0$. We prove by induction on the
total $\sum_i r_i$ that $s=0$ characterizes $\mathcal{P}$-positions,
establishing the two standard claims.

\emph{Claim 1 (from $s=0$, every move yields $s\neq 0$).}
A move changes exactly one component, say theater $j$, replacing $r_j$ by
$r_j-d$ and hence $g_j$ by $g_j':=G_j(r_j-d)$. By Lemma~\ref{lem:grundy} no
move preserves a component's Grundy value, so $g_j'\neq g_j$. The new
Nim-sum is
\[
  s' \;=\; s \oplus g_j \oplus g_j' \;=\; 0 \oplus g_j \oplus g_j'
       \;=\; g_j \oplus g_j' \;\neq\; 0,
\]
since $g_j\neq g_j'$ implies $g_j\oplus g_j'\neq 0$. Thus every option of a
state with $s=0$ has nonzero Nim-sum, and by the induction hypothesis is
an $\mathcal{N}$-position. A state all of whose options are
$\mathcal{N}$ is a $\mathcal{P}$-position.

\emph{Claim 2 (from $s\neq 0$, some move yields $s=0$).}
Let $b$ be the position of the highest set bit of $s$. Some component $g_j$
has bit $b$ set, because the bits of $s$ are the parity of the components'
bits. Put $g_j' := g_j \oplus s$. Flipping bit $b$ of $g_j$ from $1$ to $0$
and possibly altering lower bits strictly decreases the value, so
$g_j' < g_j$. By the reachability part of Lemma~\ref{lem:grundy}, theater
$j$ admits a legal move from $g_j$ to a state of Grundy value $g_j'$.
After that move the Nim-sum is
\[
  s' \;=\; s \oplus g_j \oplus g_j'
       \;=\; s \oplus g_j \oplus (g_j \oplus s) \;=\; 0 .
\]
By the induction hypothesis that option is a $\mathcal{P}$-position, so the
original state, having a $\mathcal{P}$-option, is an
$\mathcal{N}$-position.

Claims 1 and 2 together give: $s=0$ if and only if the state is a
$\mathcal{P}$-position. Substituting $g_i=r_i\bmod(m_i+1)$ from
Lemma~\ref{lem:grundy} yields the stated congruence form.
\end{proof}

\begin{remark}[Doctrinal reading of the convention]\label{rem:normal}
Theorem~\ref{thm:B} is a normal-play result, in which the side making the
final escalation is nominally the winner. This is the convention under which
the Sprague-Grundy reduction to a Nim-sum is exact, and we use it to expose
the \emph{initiative structure} of a multi-theater confrontation: the
Nim-sum is zero precisely when the side on move can be forced, by optimal
opposing play, to run out of escalatory initiative first. The doctrinally
preferred mis\`ere convention, in which crossing a threshold is a loss, does
not reduce to a Nim-sum across theaters; that case requires mis\`ere quotients
\citep{plambeck2005, plambecksiegel2008} and is treated as an open problem
in Section~\ref{sec:misere}. The qualitative lesson of Theorem~\ref{thm:B},
that joint stability is governed by the exclusive-or of theater-wise
escalation distances and is therefore neither additive nor dominated by the
single most escalated theater, is robust to this distinction.
\end{remark}

\subsection{A worked two-theater example}

\begin{example}[Europe and the Pacific]\label{ex:twotheater}
Let Europe be the theater of Example~\ref{ex:m2}, with $N_1=10$, $m_1=2$,
$S_1=\{1,2\}$ and Grundy value $G_1(r)=r\bmod 3$. Let the Pacific be a
theater with $N_2=8$, $m_2=3$, $S_2=\{1,2,3\}$ and $G_2(r)=r\bmod 4$. The
component Grundy values are
\[
\begin{array}{c|ccccccccccc}
r & 0&1&2&3&4&5&6&7&8&9&10\\\hline
\text{Europe } G_1(r)=r\bmod 3 & 0&1&2&0&1&2&0&\mathbf{1}&2&0&1\\
\text{Pacific } G_2(r)=r\bmod 4 & 0&1&2&3&0&1&\mathbf{2}&3&0&-&-
\end{array}
\]
Suppose the current rungs are $n_1=3$ and $n_2=2$, so the distances are
$r_1=7$ and $r_2=6$. Then
\[
  G_1(7)=7\bmod 3 = 1,\qquad G_2(6)=6\bmod 4 = 2,\qquad
  1\oplus 2 = 11_2 = 3 \neq 0 .
\]
The joint state is therefore an $\mathcal{N}$-position: the player to move
wins under optimal play. A winning move drives the Nim-sum to $0$ by
equalizing the two Grundy values. Escalating Europe by two rungs sends
$r_1=7$ to $r_1=5$, where $G_1(5)=2$, giving $2\oplus 2 = 0$. Equivalently,
escalating the Pacific by one rung sends $r_2=6$ to $r_2=5$, where
$G_2(5)=1$, giving $1\oplus 1 = 0$. Either move leaves the opponent in a
$\mathcal{P}$-position; note that the smaller absolute step, one rung, lies
in the theater that is not the more escalated one.
\end{example}

\section{Mis\`ere sums and the limits of tractability}\label{sec:misere}

Theorem~\ref{thm:B} reduced multi-theater stability to a Nim-sum, but only
under normal play. The doctrinally faithful convention is mis\`ere, in which
crossing a threshold is a loss, and here the Nim-sum reduction fails. This
section explains why, introduces the tool that replaces it, and reports how
far the replacement can be carried out by hand.

\subsection{Why the Nim-sum fails}

The Sprague-Grundy theorem applies to normal play only. Under mis\`ere play
the outcome of a disjunctive sum is not determined by the exclusive-or of the
component Grundy values, because the endgame inverts near the terminal
state and the inversion does not respect Nim addition.

\begin{example}[A two-theater counterexample]\label{ex:counter}
Take two theaters, each with $S=\{1,2\}$, both at distance $r=1$, joint state
$(1,1)$. The normal-play Grundy values are $G(1)=1$ and $G(1)=1$, with
Nim-sum $1\oplus 1=0$, so the Nim-sum rule \emph{predicts} a
$\mathcal{P}$-position. We compute the true mis\`ere outcome by backward
induction, writing $\mathcal{N}$ for a state in which the player to move
wins (is not forced to fire) and $\mathcal{P}$ otherwise, with the terminal
state $(0,0)$ an $\mathcal{N}$-position by Remark~\ref{rem:terminal}. The
single-coordinate states are $(0,1)$ and $(1,0)$, each a lone heap of size
one, hence $\mathcal{P}$. From $(1,1)$ the only moves (each coordinate
permits subtracting $1$ only) lead to $(0,1)$ and $(1,0)$, both
$\mathcal{P}$. A state with a $\mathcal{P}$-option is an
$\mathcal{N}$-position, so $(1,1)$ is $\mathcal{N}$. This contradicts the
Nim-sum prediction of $\mathcal{P}$. Concretely, when each of two crises sits
one rung from the threshold, the balance heuristic ``equal distances cancel''
gives the wrong answer: the side on move can in fact force the adversary into
the losing role.
\end{example}

\subsection{The mis\`ere quotient}

The right replacement is the mis\`ere quotient of Plambeck and Siegel
\citep{plambeck2005, plambecksiegel2008}. Given a game, one declares two
positions equivalent when they are interchangeable in every disjunctive sum,
that is, when substituting one for the other never changes the mis\`ere
outcome of the whole. The equivalence classes form a commutative monoid $Q$,
the mis\`ere quotient, together with a distinguished subset
$\mathcal{P}\subseteq Q$ marking the classes whose outcome is a loss for the
mover. When $Q$ is finite, it is a complete finite invariant: the mis\`ere
outcome of any sum of positions is obtained by multiplying their images in
$Q$ and testing membership in $\mathcal{P}$. The mis\`ere quotient thus plays
the role for mis\`ere sums that the single integer Grundy value plays for
normal-play sums, at the cost of being a monoid rather than a group, and
possibly a large one. For background see \citep{siegel2013}.

\subsection{Conjecture C and the \texorpdfstring{$m=2$}{m=2} quotient}

\begin{conjecture}[Tractability of mis\`ere escalation]\label{conj:C}
Let the single-theater game have escalation set $S=\{1,\dots,m\}$.
\begin{enumerate}
\item[(1)] (Proved.) The single-theater mis\`ere game is solved exactly by
Theorem~\ref{thm:A}: the $\mathcal{P}$-positions are $r\equiv 1\pmod{m+1}$.
\item[(2)] (Conjecture.) For every $m$ the mis\`ere quotient of the
single-theater game is finite. Consequently multi-theater mis\`ere stability
is decidable: it is determined by the image of the joint state in a finite
commutative monoid.
\end{enumerate}
\end{conjecture}

\begin{theorem}[Mis\`ere quotient for $m=2$]\label{thm:misere2}
For the single-theater escalation game with $S=\{1,2\}$, the mis\`ere
quotient is the commutative monoid
\[
  Q \;=\; \langle\, a,b \;\mid\; a^2=1,\ b^3=b \,\rangle,
\]
of order six, with elements $\{1,a,b,ab,b^2,ab^2\}$ and loss set
$\mathcal{P}=\{a,\,b^2\}$.
\end{theorem}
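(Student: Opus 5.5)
The plan is to follow the Plambeck–Siegel recipe: guess an explicit homomorphism from heaps to $Q$, check that it reproduces every mis\`ere outcome, and then check that $Q$ cannot be made any smaller. The guide throughout is Theorem~\ref{thm:A}: a lone heap is a $\mathcal{P}$-position exactly when $r\equiv 1\pmod 3$.

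\textbf{Step 1 (the candidate map).} Define $\Phi$ on heaps of the $S=\{1,2\}$ game by
\[
\Phi(0)=1,\quad \Phi(1)=a,\quad \Phi(2)=b,
\]
and, for $r\ge 3$,
\[
\Phi(r)=ab \text{ if } r\equiv 0,\qquad \Phi(r)=b^2 \text{ if } r\equiv 1,\qquad \Phi(r)=ab^2 \text{ if } r\equiv 2 \pmod 3.
\]
Extend $\Phi$ multiplicatively to a multi-theater position $(r_1,\dots,r_k)$, i.e.\ to a multiset of heaps. Using $a^2=1$ and $b^3=b$, every product reduces to one of the six words $\{1,a,b,ab,b^2,ab^2\}$. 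So $Q$ has at most order six, and it has exactly six elements once Step 3 separates them.

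Sanity check against Theorem~\ref{thm:A}: the single heaps with image in $\{a,b^2\}$ are exactly $r\equiv 1\pmod 3$. I would double-check the residue assignment for $r\ge 3$ against a short backward-induction table of pairs of heaps before committing to it.

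\textbf{Step 2 (outcomes are determined by the image).} Prove by induction on $\sum_i r_i$ that a position $G$ is a mis\`ere $\mathcal{P}$-position if and only if $\Phi(G)\in\{a,b^2\}$.

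The base case is the empty position: its image is $1\notin\mathcal{P}$, and it is an $\mathcal{N}$-position by Remark~\ref{rem:terminal}.

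For the inductive step, an option of $G$ changes one heap $r\mapsto r-1$ or $r\mapsto r-2$. The image therefore changes by replacing one factor $\Phi(r)$ with $\Phi(r-1)$ or $\Phi(r-2)$. There are finitely many heap classes: $0,1,2$, and $r\ge 3$ by residue mod $3$, with the two transitional cases $r=3,4$ handled separately. Hence the analysis reduces to the following finite check. For each element $x\in Q$, and each multiset of heap classes actually present whose product is $x$, verify:
\begin{itemize}
\item[(i)] if $x\in\mathcal{P}$, then every available single-heap replacement yields an image outside $\mathcal{P}$;
\item[(ii)] if $x\notin\mathcal{P}$ and $G$ is nonempty, then some available replacement yields an image in $\mathcal{P}$.
\end{itemize}
The key point is that which replacements are available depends only on which classes occur, not on the exact heap sizes. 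So the case split is by $x$ together with the support of the heap-class multiset; the relevant supports are the classes $\{1,a,b,ab,b^2,ab^2\}$ contributing to $x$.

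\textbf{Step 3 (minimality).} Show that the six elements are pairwise inequivalent. For each pair $x\neq y$, exhibit a test heap-multiset $T$ with $xT\in\mathcal{P}$ and $yT\notin\mathcal{P}$. Small tests should suffice: $T$ among $1,a,b,b^2$. For example, $T=1$ already separates $\mathcal{P}$ from non-$\mathcal{P}$ elements, and multiplying by $a$ or $b$ separates the rest. Together with Step 2 this identifies $Q$ as the quotient, with loss set $\{a,b^2\}$.

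The main obstacle is Step 2. Condition (ii) must hold for every realization of $x$, including realizations where the natural winning heap class is absent. A typical example is $x=b$ realized as $ab\cdot ab^2\cdot\ldots$ rather than by a single heap $2$. I would handle this by first tabulating, for each heap class, the images of its options, namely
\[
a\to\{1\},\quad b\to\{a,1\},\quad ab\to\{b,a\},\quad b^2\to\{ab,b\},\quad ab^2\to\{b^2,ab\},
\]
with the last three lists reflecting the $r\ge 3$ periodic regime. I would then argue class by class that some factor present always admits the required move. If the table exposes a failing realization, that signals the residue assignment in Step 1 needs adjusting, and I would repair it there.
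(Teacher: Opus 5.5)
\textbf{The gap is in Step 1.} Your values of $\Phi$ for $r\ge 3$ are wrong, so the statement Step 2 sets out to prove is false. Your sanity check against Theorem~\ref{thm:A} only tests single heaps, and single heaps cannot detect this.

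Take the two-theater position with heaps $3$ and $1$. Its options are $(2,1)$, $(1,1)$ and the lone heap $3$. Each is an $\mathcal{N}$-position: from the first two the mover leaves a lone heap of size one, and from the third the mover subtracts $2$ to the same effect. Hence $(3,1)$ is a $\mathcal{P}$-position, yet $\Phi(3)\Phi(1)=ab\cdot a=b\notin\{a,b^2\}$. Dually, $(3,3)$ is an $\mathcal{N}$-position (move to $(3,1)$), while $\Phi(3)^2=b^2\in\mathcal{P}$.

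So condition (ii) fails at exactly the kind of realization of $x=b$ you were worried about. The defect is in the map itself, not in the case analysis. Your option table is also not periodic under your own assignment: heap $6$ has image $ab$, but its options have images $ab^2$ and $b^2$.

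The missing fact is that there is no preperiod. A heap of size $r$ is interchangeable with a heap of size $r\bmod 3$ in every sum. The correct map is therefore $\Phi(r)=1,a,b$ according as $r\equiv 0,1,2\pmod 3$, for all $r\ge 0$. This is the paper's claim (iii). The classes $ab$, $b^2$, $ab^2$ are realized by the multisets $\{1,2\}$, $\{2,2\}$, $\{1,2,2\}$, never by a single heap.

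\textbf{With the corrected map, your Step 2 goes through}, provided you split on heap sizes rather than images. Image $1$ covers both the dead heap $0$ and the live heaps $r\ge 3$, whose options have images $a$ and $b$. Image $a$ covers heap $1$, whose only option has image $1$, and heaps $r\ge 4$, whose options have images $1$ and $b$. Writing the image as $a^pb^q$, the cases are as follows.
\begin{itemize}
\item From $x\in\{a,b^2\}$, every available replacement $1\to a$, $1\to b$, $a\to 1$, $a\to b$, $b\to a$, $b\to 1$ lands outside $\{a,b^2\}$.
\item From $x=1$ with some heap nonzero, subtract $2$ from a heap $r\equiv 0$ with $r\ge 3$ if one exists. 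Otherwise subtract $1$ from one of the (at least two) heaps $\equiv 1$.
\item From $x=b$, subtract $1$ if $q=1$, or $2$ if $q\ge 3$, from a heap $\equiv 2$.
\item From $x=ab$, do the opposite: subtract $2$ if $q=1$, or $1$ if $q\ge 3$.
\item From $x=ab^2$, subtract $1$ from any heap $\equiv 1$.
\end{itemize}

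Your Step 3 is fine: the tests $T\in\{1,a,b,b^2\}$ produce six distinct membership patterns.

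Repaired this way, your argument is a complete induction over all heap sizes. That is more than the paper gives: its proof rests on exhaustive computation, with (iii) checked only for $r\le 14$ and $p,q\le 4$.
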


\begin{proof}
By Theorem~\ref{thm:A} a lone heap is a loss if and only if $r\equiv
1\pmod 3$. Let $f(p,q)$ denote the mis\`ere outcome of $p$ heaps of size one
and $q$ heaps of size two. Direct backward induction gives:
\[
\begin{array}{c|ccccc}
 & q=0 & q=1 & q=2 & q=3 & q=4\\\hline
p=0 & \mathcal{N} & \mathcal{N} & \mathcal{P} & \mathcal{N} & \mathcal{P}\\
p=1 & \mathcal{P} & \mathcal{N} & \mathcal{N} & \mathcal{N} & \mathcal{N}\\
p=2 & \mathcal{N} & \mathcal{N} & \mathcal{P} & \mathcal{N} & \mathcal{P}\\
p=3 & \mathcal{P} & \mathcal{N} & \mathcal{N} & \mathcal{N} & \mathcal{N}\\
p=4 & \mathcal{N} & \mathcal{N} & \mathcal{P} & \mathcal{N} & \mathcal{P}
\end{array}
\]
The following were verified by exhaustive computation: (i) $f(p,q)=f(p+2,q)$
for all $p,q\ge 0$; (ii) $f(p,q)=f(p,q+2)$ for all $p\ge 0$ and $q\ge 1$;
(iii) a heap of size $r$ is interchangeable with a heap of size $r\bmod 3$ in
every sum, confirmed for $r\le 14$ and $p,q\le 4$; (iv) no equivalence class
beyond the six listed appears for $q\le 19$. Claims (i) and (ii) yield the
relations $a^2=1$ and $b^3=b$ for the generators $a$ (a size-one heap) and
$b$ (a size-two heap). The class $b^2$ is a nontrivial idempotent
($b^2\cdot b^2=b^4=b^2$), so $Q$ is not a group; this is the structural
reason mis\`ere play departs from the normal-play Nim-sum of
Theorem~\ref{thm:B}, and it is what the disagreement in
Example~\ref{ex:counter} reflects. Reading the loss entries gives
$\mathcal{P}=\{a,b^2\}$.
\end{proof}

\begin{remark}[Open problems]\label{rem:open}
Three questions remain open. First, the full mis\`ere quotient for general $m$
is unknown; Theorem~\ref{thm:misere2} resolves the case $m=2$, but the
quotient for $m\ge 3$ has not been computed. Second, it is unknown whether
finiteness of the mis\`ere quotient survives for non-contiguous escalation
sets such as $S=\{1,3\}$, where even the normal-play Grundy values lack a
closed form. Third, the three-player mis\`ere case, the natural model of
tripolar escalation, lies outside the two-player theory entirely and is taken
up in Section~\ref{sec:extensions}. The structural consequence of
Theorem~\ref{thm:misere2} and Conjecture~\ref{conj:C}(2) is that
multi-theater stability under the realistic mis\`ere condition is governed by
a monoid with nontrivial idempotents rather than by a group, so simple
balance heuristics such as matching escalation distances can fail; the
counterexample is Example~\ref{ex:counter}.
\end{remark}

\section{Extensions}\label{sec:extensions}

We sketch three directions in which the base model can be enriched, each of
which trades tractability for realism, and note honestly where the clean
theory ends.

\paragraph{Asymmetric capability and partizan games.}
If the two sides have different escalation sets, $S_{\mathrm{Left}}\neq
S_{\mathrm{Right}}$, the game ceases to be impartial and becomes partizan in
the sense of Conway \citep{conway1976, bcg1982}. Positions then carry
surreal-number values rather than Grundy values, and the clean Nim-sum
structure of Theorem~\ref{thm:B} is lost. This is the natural formalization
of escalation dominance grounded in capability asymmetry, and it is the
mathematically least tractable of the extensions.

\paragraph{De-escalation and loopy games.}
Allowing a player to move down the ladder, $r\mapsto r+d$, breaks the
guarantee of termination and produces a loopy game in which draws, read as
indefinite crisis survival, become possible \citep{bcg1982}. The relevant
machinery is the theory of loopy and survival games rather than finite
Sprague-Grundy theory. This extension is the most faithful to real crisis
behavior, where de-escalation is always an option, and is a priority for
future work.

\paragraph{Three players and the tripolar negative result.}
The natural model of a United States, Russia, and China confrontation is a
three-player escalation game. Three-player impartial game theory, however,
has no Sprague-Grundy analog: outcomes depend on coalition and tie-breaking
conventions, and no single Nim-sum invariant governs disjunctive sums
\citep{li1978, straffin1985, propp2000}. We read this absence as a result in
its own right: tripolar escalation admits no impartial-game stability
invariant of the kind Theorem~\ref{thm:B} provides for the bipolar case,
which is formal support for the view that tripolarity is qualitatively less
analyzable, and plausibly less stable, than bipolarity.

\section{Policy interpretation and limitations}\label{sec:policy}

The results above are structural, not predictive. They describe positional
invariants of an idealized escalation game, and they should be read as a
complement to, not a replacement for, the classical payoff-based analysis of
deterrence \citep{powell1990, brams1985}. Three limitations bound the claims.
First, the escalation set $S$ is assumed rather than derived; mapping doctrine
onto a subtraction set is interpretive, and the robust content of our results
is the dependence of stability on the arithmetic of $S$ rather than any single
calibration. Second, payoffs are absent by design, so the model is silent
about intensity of preference, the value of what is at stake, and the human
factors that classical models encode. Third, the mis\`ere multi-theater
theory is incomplete, as Section~\ref{sec:misere} makes explicit. What the
framework does offer is a small number of closed-form invariants, the
congruence of Theorem~\ref{thm:A} and the Nim-sum of Theorem~\ref{thm:B},
that isolate the combinatorics of who is forced to act, a question the
payoff-based tradition leaves implicit.

\section{Conclusion}\label{sec:conclusion}

We have recast Kahn's escalation ladder as an impartial combinatorial game
and shown that the recast yields closed-form structural results: a congruence
characterizing single-ladder stable rungs, a ladder-design corollary, and a
Nim-sum characterization of multi-theater stability under normal play. For the
doctrinally faithful mis\`ere convention we have proved the order-six
mis\`ere quotient for the two-step escalation case (Theorem~\ref{thm:misere2})
by exhaustive backward induction, confirming the monoid
$\langle a,b\mid a^2=1,\,b^3=b\rangle$ with loss set $\{a,b^2\}$. We have
also been candid about where the theory stops: the mis\`ere quotient for
$m\ge 3$ is open, non-contiguous escalation sets lose the modular formula,
and the tripolar case has no clean invariant at all. The open problems of
Remark~\ref{rem:open} are the natural next steps.



\begin{thebibliography}{99}

\bibitem{kahn1965}
H.~Kahn, \emph{On Escalation: Metaphors and Scenarios}.
New York: Praeger, 1965.

\bibitem{schelling1960}
T.~C. Schelling, \emph{The Strategy of Conflict}.
Cambridge, MA: Harvard University Press, 1960.

\bibitem{schelling1966}
T.~C. Schelling, \emph{Arms and Influence}.
New Haven, CT: Yale University Press, 1966.

\bibitem{bouton1901}
C.~L. Bouton, ``Nim, a game with a complete mathematical theory,''
\emph{Annals of Mathematics}, vol.~3, no.~1/4, pp.~35--39, 1901--1902.

\bibitem{sprague1935}
R.~Sprague, ``\"Uber mathematische Kampfspiele,''
\emph{T\^ohoku Mathematical Journal}, vol.~41, pp.~438--444, 1935.

\bibitem{grundy1939}
P.~M. Grundy, ``Mathematics and games,''
\emph{Eureka}, vol.~2, pp.~6--8, 1939.

\bibitem{conway1976}
J.~H. Conway, \emph{On Numbers and Games}.
London: Academic Press, 1976.

\bibitem{bcg1982}
E.~R. Berlekamp, J.~H. Conway, and R.~K. Guy,
\emph{Winning Ways for Your Mathematical Plays}.
London: Academic Press, 1982.

\bibitem{guysmith1956}
R.~K. Guy and C.~A.~B. Smith, ``The $G$-values of various games,''
\emph{Proceedings of the Cambridge Philosophical Society},
vol.~52, no.~3, pp.~514--526, 1956.

\bibitem{siegel2013}
A.~N. Siegel, \emph{Combinatorial Game Theory}.
Graduate Studies in Mathematics, vol.~146.
Providence, RI: American Mathematical Society, 2013.

\bibitem{plambeck2005}
T.~E. Plambeck, ``Taming the wild in impartial combinatorial games,''
\emph{Integers}, vol.~5, no.~1, \#G05, 2005.

\bibitem{plambecksiegel2008}
T.~E. Plambeck and A.~N. Siegel,
``Mis\`ere quotients for impartial games,''
\emph{Journal of Combinatorial Theory, Series A},
vol.~115, no.~4, pp.~593--622, 2008.

\bibitem{li1978}
S.-Y.~R. Li, ``$N$-person Nim and $n$-person Moore's games,''
\emph{International Journal of Game Theory},
vol.~7, no.~1, pp.~31--36, 1978.
doi:10.1007/BF01763118.

\bibitem{straffin1985}
P.~D. Straffin, ``Three-person winner-take-all games with
McCarthy's revenge rule,''
\emph{The College Mathematics Journal},
vol.~16, no.~5, pp.~386--394, 1985.

\bibitem{propp2000}
J.~Propp, ``Three-player impartial games,''
\emph{Theoretical Computer Science},
vol.~233, no.~1--2, pp.~263--278, 2000.

\bibitem{brams1985}
S.~J. Brams, \emph{Superpower Games: Applying Game Theory to
Superpower Conflict}.
New Haven, CT: Yale University Press, 1985.

\bibitem{brams1994}
S.~J. Brams, \emph{Theory of Moves}.
Cambridge: Cambridge University Press, 1994.

\bibitem{bramskilgour1987}
S.~J. Brams and D.~M. Kilgour,
``Winding down if preemption or escalation occurs,''
\emph{Journal of Conflict Resolution},
vol.~31, no.~4, pp.~547--572, 1987.

\bibitem{bramskilgour1988}
S.~J. Brams and D.~M. Kilgour,
\emph{Game Theory and National Security}.
Oxford: Basil Blackwell, 1988.

\bibitem{powell1990}
R.~Powell, \emph{Nuclear Deterrence Theory: The Search for Credibility}.
Cambridge: Cambridge University Press, 1990.

\bibitem{powell1988}
R.~Powell, ``Nuclear brinkmanship with two-sided incomplete information,''
\emph{American Political Science Review},
vol.~82, no.~1, pp.~155--178, 1988.

\bibitem{roberson2006}
B.~Roberson, ``The Colonel Blotto game,''
\emph{Economic Theory}, vol.~29, no.~1, pp.~1--24, 2006.
doi:10.1007/s00199-005-0071-5.

\bibitem{shubik1971}
M.~Shubik, ``The dollar auction game: a paradox in noncooperative
behavior and escalation,''
\emph{Journal of Conflict Resolution},
vol.~15, no.~1, pp.~109--111, 1971.
doi:10.1177/002200277101500111.

\end{thebibliography}
\end{document}